\newtheorem{thm}{Theorem}
\newtheorem{lem}[thm]{Lemma}
\newtheorem{prop}[thm]{Proposition}
\newtheorem{prob}{Problem}
\newtheorem{defn}{Definition}
\newtheorem{rem}{Remark}
\newtheorem{assum}{Assumption}
\title{\LARGE \bf
	Data-driven output synchronization of heterogeneous leader-follower multi-agent systems*}
\author{Junjie Jiao$^{1}$, Henk J. van Waarde$^{2}$, Harry L. Trentelman$^{3}$, M. Kanat Camlibel$^{3}$, and Sandra Hirche$^{1}$
	\thanks{*The work of J. Jiao and S. Hirche has received funding from the German Research Foundation (DFG) within the Joint Sino-German research project Control and Optimization for Event-triggered Networked Autonomous Multi-agent Systems (COVEMAS). The work of H. J. van Waarde has received funding from the European Research Council under the Advanced ERC Grant Agreement Switchlet n. 670645.}
	\thanks{$^{1}$J. Jiao and S. Hirche are with the Chair of Information-oriented Control, Department of
		Electrical and Computer Engineering, Technical University of Munich, 80333, Munich, Germany.
		Email: {\tt  junjie.jiao@tum.de; hirche@tum.de}}%
	\thanks{$^{2}$H. J. van Waarde is with the Control Group, Department of Engineering,  University of Cambridge, Cambridge, UK. Email: {\tt\small  hv280@cam.ac.uk}}
	\thanks{$^{3}$H. L. Trentelman and M. K. Camlibel are with the Bernoulli Institute for Mathematics, Computer Science, and Artificial Intelligence, University of Groningen, Nijenborgh 9, 9747 AG, Groningen, The Netherlands. Email:   {\tt\small  h.l.trentelman@rug.nl;
			m.k.camlibel@rug.nl}}%
}
\begin{document}

	\maketitle
	
	\thispagestyle{empty}
	\pagestyle{empty}

	\begin{abstract}
		
		This paper deals with  data-driven output synchronization for heterogeneous leader-follower  linear multi-agent systems. Given a multi-agent system that consists of one autonomous leader and a number of  heterogeneous  followers with external disturbances, we provide necessary and sufficient data-based conditions for output synchronization.   We also provide a design method for obtaining such  output synchronizing protocols directly from data. The results are then extended to the special case that the followers are disturbance-free.  Finally, a simulation example is provided to illustrate our results.
		
	\end{abstract}

	\section{Introduction}
	Over the last two decades, the design of  distributed protocols for multi-agent systems that achieve consensus or synchronization has been an active research topic in the field of systems and control, see e.g., \cite{Olfati-Saber2004,WIELAND20111068,Jiao2020H2output,Zhongkui2014,Jiao2019local,Jiao2020tac,XianweiLi2019tcns}. 
	Most of the existing work is concerned with model-based approaches, i.e. they assume that agent models are known. In particular,  it is shown in \cite{WIELAND20111068} that solvability of certain regulator equations is a necessary condition for output synchronization, and suitable protocols are proposed.

	To remove the dependency on agent models in consensus or synchronization problems, some existing papers propose data-driven approaches based on reinforcement learning.
	In \cite{2017RL}, a data-based adaptive dynamic programming method is proposed for computing optimal distributed  control algorithms for leader-follower multi-agent  systems.
	In \cite{ABOUHEAF20143038}, a synchronization problem is first interpreted as a multi-agent discrete-time dynamic game, and then distributed protocols are proposed based on reinforcement learning value iteration algorithms. 
	However, these data-driven methods  require large amounts of data and are computationally expensive.

	Very recently, based on Willems' fundamental lemma \cite{WILLEMS2005325}, the problem of  system analysis and control directly from  data has attracted much attention, see e.g. \cite{2020claudio, Berberich2019lcss, Henk2020data,Henk2021slemma}.  In \cite{2020claudio},   several   control problems are solved   directly from data. Later on, several fundamental analysis and control problem are addressed in  \cite{Henk2020data}, without the requirement of persistency of excitation. A robust data-driven design is proposed in \cite{Berberich2020ecc} for computing feedback controllers directly using (noisy) data. In \cite{Henk2021slemma}, several versions of the S-Lemma are generalized to matrix versions, which are then used to design feedback controllers   from noisy data. For more work on this topic, we refer to \cite{Berberich2019lcss, Nima2020lcss,Henk2020lcss} and the references therein.

While the majority of research on data-driven control has focused on  centralized settings for single systems,   the distributed setting for networked systems is relatively unexplored.
In  \cite{Cortes2020arxiv}, a distributed data-based predictive control method is proposed to stabilize networked systems.
In \cite{Steentjes2020cdc}, the problem of synthesizing distributed data-based controllers from noiseless data is considered. 
Using noisy input-state data, guaranteed $H_\infty$ performance analysis and controller synthesis are provided in \cite{Steentjes2021arxiv} for interconnected systems. For further related work, see also \cite{Baggio2021,Cherukuri2020tac}.

	Different from the above work \cite{Cortes2020arxiv,Steentjes2021arxiv,Steentjes2020cdc, Baggio2021,Cherukuri2020tac}, in the present paper, we will deal with the {\em data-driven  output synchronization} problem for heterogeneous leader-follower multi-agent systems. In particular, we will provide data-based conditions under which the proposed distributed protocols achieve output synchronization for  multi-agent systems. We will also provide a method for computing such  output synchronizing protocols directly from  data.

	This paper is organized as follows. 
	In Section \ref{sec_notation} we introduce some notation and graph theory.
	In Section \ref{sec_prob} we formulate the data-driven output synchronization problem.
	In order to solve the formulated problem, in Section \ref{sec_preliminary} we review some relevant results on model-based output synchronization and data informativity for stabilization by state feedback. in Section \ref{section_main_result} we    solve the problems formulated in Section \ref{sec_prob}. To illustrate our proposed
	method, a simulation example is given in Section \ref{sec_simulation}. Finally, Section \ref{sec_conclusions} concludes this paper. 
	
	\section{Notation and graph theory}\label{sec_notation}
	\subsection{Notation}
	We denote by $\mathbb{R}$ the field of real numbers, and by $\mathbb{R}^n$ the  $n$-dimensional  real Euclidean space.
	We denote by $\mathbb{R}^{n\times m}$ the space of  real $n\times m$ matrices. 
	For a given matrix $A$, its transpose is denoted by $A^{\top}$.
	By  $\text{diag} (  a_1, a_2, \ldots, a_n )$,
	we denote the $n\times n$ diagonal matrix with $a_1, a_2, \ldots, a_n $ on the diagonal.
	For a linear map $A: \mathcal{X} \to \mathcal{Y}$,  the image and kernel of $A$ are denoted by ${\rm im} (A): = \{ Ax \mid x \in \mathcal{X} \}$ and $\ker (A) := \{ x \in \mathcal{X} \mid Ax =0 \}$.
	\subsection{Graph theory}

	A weighted directed graph is denoted by $\mathcal{G} = (\mathcal{V}, \mathcal{E}, \mathcal{A})$, where $\mathcal{V} = \{ 1,2,\ldots, N \}$ is the finite nonempty node set, $\mathcal{E} \subset \mathcal{V} \times \mathcal{V}$ is the edge set of ordered pairs $(i,j)$  and $\mathcal{A} = [a_{ij}]$ is the associated adjacency matrix with nonnegative entries.
	The entry $a_{ji}$ of the adjacency matrix $\mathcal{A}$ is the weight associated with the edge $(i,j)$ and $a_{ji}$ is nonzero if and only if $(i,j) \in \mathcal{E}$. 
	%
	A graph is called simple if $a_{ii} =0$, i.e. the graph does not contain self-loops.
	A directed tree is a directed graph in which one node (called the root node) has its in-degree equal to zero and all other nodes have their in-degree equal to one.
	A {\em spanning tree} of a directed graph $\mathcal{G}$ is a directed tree that connects all nodes of the graph $\mathcal{G}$.

	Given a graph $\mathcal{G}$, the degree matrix of $\mathcal{G}$ is denoted by $\mathcal{D} = \textnormal{diag}(d_1,d_2,\ldots, d_N)$ with $d_i = \sum_{j=1}^N a_{ij}$.
	The Laplacian matrix of  $\mathcal{G}$ is defined as $L := \mathcal{D} - \mathcal{A}$.
	If $\mathcal{G}$ is a weighted directed graph,  its Laplacian matrix $L$ has at least one zero eigenvalue associated with the eigenvector $\mathbf{1}_N$ and all nonzero eigenvalues have positive real parts. 
	Furthermore, zero is a simple eigenvalue of $L$ if and only if $\mathcal{G}$ contains a spanning tree.

	\section{Problem formulation} \label{sec_prob}
We consider a leader-follower multi-agent system  that consists of one leader and $N$ heterogeneous followers.
	The   dynamics of the leader  is represented by \begin{equation}\label{leader_data}
		x_r(k+1) = S x_r(k),
	\end{equation}
	and  is assumed to be {\em known}, where $x_r \in \mathbb{R}^{r}$  and $S \in \mathbb{R}^{r \times r}$.  
	It is reasonable to assume that the  leader dynamics is known, since the followers need to know the dynamics they will synchronize on.
	The dynamics of the $i$th follower ($i= 1,2,\ldots, N$) is   described by
	\begin{equation}\label{follower_disturb}
		\begin{aligned}
			x_i(k+1) &= \bar{A}_i x_i(k) + \bar{B}_i u_i(k) +\bar{E}_i w_i(k),
		\end{aligned}
	\end{equation}
	where $x_i \in \mathbb{R}^{n_i}$ is the state of the $i$th follower,  $u_i \in \mathbb{R}^{m_i}$ is the associated control input,  and $w_i \in \mathbb{R}^{q_i}$ the  external disturbance.
	The matrices $\bar{A}_i$, $\bar{B}_i$ and $\bar{E}_i$  are  of suitable dimensions.
	We refer to \eqref{follower_disturb} as the `true' system of the $i$th follower, denoted by $(\bar{A}_i, \bar{B}_i, \bar{E}_i)$.
	We consider the situation that  the matrices $\bar{A}_i$, $\bar{B}_i$ and $\bar{E}_i$  of the true system  are {\em unknown} and we  only have access to  a finite set of data on the finite time interval $\{0,1,\ldots, \tau \}$, generated by the followers, namely,
	\begin{equation}\label{input_state_data}
		\begin{aligned}
			&U_{i-} := [u_i(0) \quad u_i(1)\quad \cdots \quad u_i(\tau -1)],\\
			&X_{i} := [x_i(0) \quad x_i(1)\quad \cdots \quad x_i(\tau)],
		\end{aligned}
	\end{equation}
	as well as   measurements of the disturbances 
	\begin{equation}\label{disturb_data}
		W_{i-} = [w_i(0) \quad w_i(1)\quad \cdots \quad w_i(\tau -1)].
	\end{equation} 
	By partitioning the state data as 
	\begin{align*}
		&X_{i-} = [x_i(0) \quad x_i(1)\quad \cdots \quad x_i(\tau -1)],\\
		&X_{i+} = [x_i(1) \quad x_i(2)\quad \cdots \quad x_i(\tau)],
	\end{align*}
	we can  relate the data and the true system  $(\bar{A}_i, \bar{B}_i, \bar{E}_i)$ of the $i$th follower through
	\begin{equation*} 
		X_{i+} = 
		\begin{bmatrix}
			\bar{A}_i& \bar{B}_i & \bar{E}_i
		\end{bmatrix}
		\begin{bmatrix}
			X_{i-}\\
			U_{i-} \\
			W_{i-} 
		\end{bmatrix}.
	\end{equation*}
	Note that the true system  $(\bar{A}_i, \bar{B}_i, \bar{E}_i)$ may not be the only system that explains the data $(U_{i-}, W_{i-}, X_i)$ of the $i$th follower,  see e.g., \cite{Henk2020data}.
	Therefore, we define the set of all systems $(A_{i}, B_{i}, E_i)$ that explain the  data $(U_{i-}, W_{i-}, X_i)$  of the $i$th follower by
	\begin{equation}\label{follower_noise_set}
		\Sigma_{w,i} := \left\{(A_i, B_i, E_i) \mid  X_{i+} = 
		\begin{bmatrix}
			A_i & B_i & E_i
		\end{bmatrix}
		\begin{bmatrix}
			X_{i-}\\
			U_{i-} \\
			W_{i-} 
		\end{bmatrix}\right\}.
	\end{equation}
	Obviously, $(\bar{A}_i, \bar{B}_i, \bar{E}_i)\in \Sigma_{w, i}$.

In this paper, we consider the {\em output synchronization} problem \cite{KIUMARSI201786, WIELAND20111068, JIAO2021104872}. 
	To this end, we  assign to the leader \eqref{leader_data} an output 
	\begin{equation}\label{leader_output}
		y_r(k) = R x_r(k),
	\end{equation}
	where $y_r \in \mathbb{R}^{p}$, and    $R  \in \mathbb{R}^{p\times r}$ is a {\em known} matrix.	We assume that the pair $(R, S)$ is observable.
	We also assign to each follower \eqref{follower_disturb} an output
	\begin{equation}\label{follower_output}
		y_i(k) = C_i x_i(k) + D_i u_i(k),
	\end{equation}
	where $y_i \in \mathbb{R}^{p}$, and  the matrices  $C_i$ and $D_i$ are {\em known} matrices that specify the outputs to be synchronized.
	
	We also make the following standard assumption for output synchronization \cite{KIUMARSI201786, yangtao2013acc, JIANG2020109149}:
	\begin{assum}\label{matrix_S_data}
		We assume that all eigenvalues of $S$ are simple and lie on the unit circle.
	\end{assum}
	
	Following \cite{KIUMARSI201786}, we consider the case that  the followers \eqref{follower_disturb} will be interconnected by a distributed protocol of the form
	\begin{equation}\label{protocol}
		\begin{aligned}
			v_i(k+1) & = S v_i(k)  + (1+d_i + g_i)^{-1}F \\
			&\times\Big(\sum_{j=1}^{N}a_{ij} \big( v_j(k) - v_i(k) \big)  + g_i \big( x_r(k) - v_i(k) \big)\Big), \\
			u_i(k) &=K_i \Big( x_i(k) -\Pi_i v_i(k) \Big) +\Gamma_i v_i(k),\quad i =1,2,\ldots, N
		\end{aligned}
	\end{equation}
	where  $v_i \in \mathbb{R}^r$ is the state of the $i$th local controller,   $F$ and $K_i$ are  gain matrices to be designed,   $\Pi_i $ and $\Gamma_i$ are matrices to be determined later.
	The coefficient $a_{ij}$ is the $ij$th entry of the adjacency matrix $\mathcal{A}$ of  the communication graph between the followers,  $d_i$ is the node degree of the $i$th follower, and  $g_i$ represents the communication between the leader and the followers. Strict inequality $g_i > 0$ means  that the leader shares its state information with the $i$th follower, otherwise $g_i =0$.

	\begin{defn}\label{defn_outsynch}
		The protocol \eqref{protocol} is said to achieve  output synchronization for the multi-agent system \eqref{leader_data} and \eqref{follower_disturb} if $y_i (k)- y_r(k) \to 0$, $v_i(k) - x_r(k) \to 0$ and $x_i(k) - \Pi_i v_i(k) \to 0$ as $k \to \infty$.
	\end{defn}

	We   make the following standing assumption regarding the communication between the leader and  followers \cite{KIUMARSI201786, HENGSTERMOVRIC2013414}.
	\begin{assum}\label{assum_graph}
		We assume that the communication graph between the followers is a  simple directed graph that contains a spanning tree, and the leader shares its information with at least one of the root nodes.
	\end{assum}
	
	Recall that we only have access to a finite set of data $(U_{i-}, W_{i-}, X_i)$ of the followers. 
Since  the data of each follower can be explained by a set of systems as defined in \eqref{follower_noise_set},  and it is  not possible to distinguish the true system \eqref{follower_disturb}  from other systems in this set.
	We then introduce the following definition:
	\begin{defn}\label{defn_outputsynch}
		The data $(U_{i-}, W_{i-}, X_i)$ are  informative for output synchronization if  there exists a protocol \eqref{protocol} that achieves  output synchronization for  the leader \eqref{leader_data} and all systems $({A}_i, {B}_i, {E}_i)\in \Sigma_{w, i}$, for $i=1,2,\ldots,N$.
	\end{defn}
	
The problem that we want to address  is the following.
	
	\begin{prob}\label{prob_noise}
		Find conditions under which the data $(U_{i-}, W_{i-}, X_i)$ are informative for output synchronization. Also, provide   a design method for computing a protocol \eqref{protocol} that achieves  output synchronization.
	\end{prob}
	
	Before solving Problem \ref{prob_noise}, we will first review  some  relevant results on model-based output synchronization and data informativity for stabilization. These preliminary results are necessary ingredients for addressing Problem \ref{prob_noise}.

	\section{Preliminary results}\label{sec_preliminary}
	\subsection{Model-based output synchronization of heterogeneous leader-follower multi-agent systems} \label{modelbased}
	
	In this subsection, we will review some relevant results  on model-based  output synchronization of heterogeneous  leader-follower  linear multi-agent systems, see also \cite{KIUMARSI201786}. 
	
	Consider a multi-agent system that consists of one leader and $N$ heterogeneous followers.
	The dynamics of the leader is represented by \eqref{leader_data} with associated output \eqref{leader_output}. We assume that Assumption \ref{matrix_S_data} holds.
	The dynamics of the $i$th   follower   is represented by
	\begin{equation}\label{follower_model}
		x_i(k+1) = {A}_i x_i(k) + {B}_i u_i(k),\quad i = 1,2,\ldots, N
	\end{equation}
	with associated output \eqref{follower_output}.
	All system matrices are assumed to be known. 
	We also assume that  the pairs $(A_i, B_i)$ are stabilizable and the pairs $(C_i, A_i)$ are detectable.

	Following \cite{KIUMARSI201786}, we   consider the case that  the followers \eqref{follower_model} are to be interconnected by a distributed protocol of the form \eqref{protocol}. We also assume that Assumption \ref{assum_graph} holds.
	The aim is then to design a protocol  \eqref{protocol} such that the controlled multi-agent system achieves output synchronization.

	The following proposition  provides necessary and sufficient conditions under which  protocols \eqref{protocol} achieve output synchronization, see also  \cite{KIUMARSI201786}.
	\begin{prop}\label{model_based_thm}
		Let Assumptions \ref{matrix_S_data} and \ref{assum_graph}
		hold.	
		Let $F$ and  $K_i$ be  matrices such that $ S - \lambda_i F$ and $A_i + B_i K_i$,  $i = 1,2,\ldots, N$ are  stable, where $\lambda_i$ are the eigenvalues of the matrix $(I_N + \mathcal{D} +G)^{-1}(L+G)$  with $G = {\rm diag}(g_1,g_2,\ldots,g_N)$.
		Then there exists a protocol  \eqref{protocol} that achieves output  synchronization for  the  multi-agent system \eqref{leader_data} and  \eqref{follower_model} if and only if  there exist matrices  $\Pi_i \in \mathbb{R}^{n_i \times r}$ and $\Gamma_i \in \mathbb{R}^{m_i \times r}$ satisfying the regulator equations
		\begin{align}
			A_i \Pi_i  + B_i \Gamma_i 	& = \Pi_i S , \label{reg_1} \\
			C_i \Pi_i + D_i \Gamma_i 		& = R, \quad i = 1,2,\ldots, N. \label{reg_2}
		\end{align}
	\end{prop}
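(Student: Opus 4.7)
The plan is to change to error coordinates $\bar v_i := v_i - x_r$ and $\bar x_i := x_i - \Pi_i v_i$, which decouple the dynamics into an observer part (the $\bar v_i$) and a regulation part (the $\bar x_i$). Subtracting \eqref{leader_data} from the first line of \eqref{protocol} and using the identity $\sum_{j} a_{ij}(v_j - v_i) - g_i v_i = -((L+G)\bar v)_i$, I would stack the equations over $i$ into
\begin{equation*}
\bar v(k+1) = \bigl(I_N \otimes S - M \otimes F\bigr)\,\bar v(k),\qquad M := (I_N + \mathcal{D} + G)^{-1}(L+G).
\end{equation*}
A Schur triangularisation $M = P T P^{-1}$ renders $I_N \otimes S - M \otimes F$ similar to a block upper triangular matrix whose diagonal blocks are precisely the $S - \lambda_i F$, each of which is stable by hypothesis. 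Hence $\bar v_i(k) \to 0$.

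Next, a direct expansion of $\bar x_i(k+1) = x_i(k+1) - \Pi_i v_i(k+1)$ using \eqref{follower_model} and \eqref{protocol} produces the two key identities
\begin{align*}
\bar x_i(k+1) &= (A_i + B_i K_i)\,\bar x_i(k) + (A_i \Pi_i + B_i \Gamma_i - \Pi_i S)\,v_i(k) + \varepsilon_i(k),\\
y_i(k) - y_r(k) &= (C_i + D_i K_i)\,\bar x_i(k) + (C_i \Pi_i + D_i \Gamma_i - R)\,v_i(k) + R\,\bar v_i(k),
\end{align*}
where $\varepsilon_i(k)$ depends linearly on the $\bar v_j(k)$'s and hence decays to zero. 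If \eqref{reg_1}--\eqref{reg_2} hold, the ``persistent'' middle term vanishes in each identity, so stability of $A_i + B_i K_i$ drives $\bar x_i \to 0$ and then $y_i - y_r \to 0$, which combined with $\bar v_i \to 0$ yields output synchronization in the sense of Definition \ref{defn_outsynch}. This establishes sufficiency.

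For necessity, assume that a protocol \eqref{protocol} with some choice of $\Pi_i, \Gamma_i$ achieves output synchronization. Then $\bar x_i \to 0$ and $\varepsilon_i \to 0$, and the first identity forces $(A_i\Pi_i + B_i\Gamma_i - \Pi_i S)\,v_i(k) \to 0$; since $v_i - x_r \to 0$ and $x_r(k) = S^k x_r(0)$ with $x_r(0)$ arbitrary, this reduces to $(A_i\Pi_i + B_i\Gamma_i - \Pi_i S)\,S^k \to 0$. The step I expect to need the most care is concluding from this that $A_i \Pi_i + B_i \Gamma_i = \Pi_i S$: here Assumption \ref{matrix_S_data} is essential, since the simple unit-circle spectrum of $S$ makes $\{S^k\}$ a bounded but non-decaying matrix sequence, so no nonzero matrix can satisfy $M_i S^k \to 0$. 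The second regulator equation then follows by the identical spectral argument applied to the $y_i - y_r$ identity, after observing that $R \bar v_i \to 0$ contributes nothing to the persistent part.
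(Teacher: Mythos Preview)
The paper does not actually prove this proposition; it simply states that the argument is similar to results in \cite{KIUMARSI201786} and omits it. Your approach is the standard one for this class of results and is correct: the change to error variables $\bar v_i = v_i - x_r$ and $\bar x_i = x_i - \Pi_i v_i$ block-triangularises the closed loop, the Schur form of $M$ exposes the diagonal blocks $S-\lambda_i F$, and the regulator equations are precisely what annihilate the persistent $v_i$-terms in the $\bar x_i$ and $y_i-y_r$ identities. Your necessity argument via the non-decaying sequence $\{S^k\}$ under Assumption~\ref{matrix_S_data} is also the right mechanism; since $S$ is diagonalisable with unit-modulus eigenvalues, $M_i S^k \to 0$ forces $M_i = 0$.

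One small slip worth correcting: the identity you quote should read
\[
\sum_j a_{ij}(v_j - v_i) + g_i(x_r - v_i) \;=\; -\bigl((L+G)\bar v\bigr)_i,
\]
not ``$-g_i v_i$'' on the left. The protocol term is $g_i(x_r - v_i) = -g_i \bar v_i$, and only with this shift does the right-hand side involve $\bar v$ rather than $v$. Your stacked equation $\bar v(k+1) = (I_N\otimes S - M\otimes F)\bar v(k)$ is nonetheless correct, so this is purely cosmetic.
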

	
	The  proof of Proposition \ref{model_based_thm} is similar to the results in \cite{KIUMARSI201786} and is omitted here.
	Note that the system equation \eqref{follower_output} representing the output of the $i$th follower is slightly  more general than that in \cite{KIUMARSI201786}. Indeed, the output equations in our followers contain a direct feed-through term.
	\begin{rem} 
		%
		We  note that there exist methods to compute a  simultaneously stabilizing  gain matrix $F$  in the sense that $ S - \lambda_i F$  is  stable for $i =1,2,\ldots,N$. For instance, in \cite{HENGSTERMOVRIC2013414}, such  a gain matrix  $F$ is computed  by solving   discrete-time  Riccati inequalities.
	\end{rem}

	\subsection{Data-driven stabilization by state feedback for linear systems}
	In this subsection, we will review some results from \cite{Henk2020ifac} and \cite{Henk2020data} on data-driven stabilization by state feedback for linear systems.
	
	Consider the linear system
	\begin{equation}\label{system_noise}
		x(k+1) = \bar{A} x(k) + \bar{B}  u(k) + \bar{E} w (k),
	\end{equation}
	where $x\in \mathbb{R}^{n}$ is the state, $u\in \mathbb{R}^{m}$ the input and $w \in \mathbb{R}^{q}$ the external disturbance. The matrices $\bar{A}$, $\bar{B}$ and $\bar{E}$ are of suitable dimensions.
	We consider the case that the dynamics of the system  \eqref{system_noise} is  {\em unknown}, i.e., the matrices $\bar{A}$, $\bar{B}$ and $\bar{E}$ are {\em unknown}.  However,  similar to \eqref{input_state_data} and \eqref{disturb_data}, we assume that we have access to a finite set of data of  system \eqref{system_noise}, namely, $(U_{-}, W_{-},X)$.
	
	We refer to \eqref{system_noise} as the `true' system, denoted by $(\bar{A}, \bar{B}, \bar{E})$.
	Note that the true system  $(\bar{A}, \bar{B}, \bar{E})$ may not be the only system that explains the data $(U_{-}, W_{-}, X)$, see e.g. \cite{Henk2020data, Henk2020ifac}.
	To this end, we define the set of all systems $(A, B, E)$ that explain the   data $(U_{-}, W_{-}, X)$   by
	\begin{equation}\label{sys_noise_set}
		\Sigma_{w}  := \left\{(A, B, E) \mid  X_{+} = 
		\begin{bmatrix}
			A & B & E 
		\end{bmatrix}
		\begin{bmatrix}
			X_{-}\\
			U_{-} \\
			W_{-} 
		\end{bmatrix}\right\}.
	\end{equation} 
	Clearly, $(\bar{A}, \bar{B}, \bar{E}) \in \Sigma_{w} $.

	In what follows, we will consider the problem of finding a stabilizing controller for the system \eqref{system_noise}, using only and directly the data $(U_{-}, W_{-}, X)$. For this, we introduce the following notion of informativity for stabilization by state feedback.
	\begin{defn}\label{defn_stable_noise}
		We say that the data $(U_{-}, W_{-}, X)$ are {\em informative for stabilization by state feedback} if there exists a  gain $K$ such that $A+BK$ is stable,  for all $(A,B,E) \in\Sigma_{w}$.
	\end{defn}

	The follow proposition  provides necessary and sufficient conditions for informativity for stabilization by state feedback, see also \cite[Lemma  12]{Henk2020ifac}.
	\begin{prop}\label{prop_K_noise}
		The data $(U_{-}, W_{-}, X)$ are informative for stabilization by state feedback if and only if the matrix $X_{-}$ has full row rank and there exists a right inverse $X_{-}^\dagger$ of $X_{-}$ such that $X_{+}X_{-}^\dagger$ is  stable and $W_{-}X_{-}^\dagger =0$.
		Moreover, $K$ is such that  $A+BK$ is stable  for all $(A,B,E) \in\Sigma_{w}$ if and only if $K = U_{-} X_{-}^\dagger$, where $X_{-}^\dagger$ satisfies the above properties.
	\end{prop}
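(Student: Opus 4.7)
My plan is to prove the biconditional in two parts, deducing the characterization of $K$ as a by-product. Sufficiency is constructive, while necessity splits into a straightforward rank argument and a reduction to the noise-free analogue of the same proposition.

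\emph{Sufficiency.} Assume $X_-$ has full row rank and let $X_-^\dagger$ be a right inverse with $W_- X_-^\dagger = 0$ and $X_+ X_-^\dagger$ stable. Define $K := U_- X_-^\dagger$. For any $(A, B, E) \in \Sigma_w$, right-multiplying the defining equation $X_+ = A X_- + B U_- + E W_-$ by $X_-^\dagger$ yields
\begin{equation*}
X_+ X_-^\dagger = A (X_- X_-^\dagger) + B (U_- X_-^\dagger) + E (W_- X_-^\dagger) = A + BK,
\end{equation*}
so $A + BK = X_+ X_-^\dagger$ is stable. This also proves one implication of the moreover claim.

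\emph{Necessity -- full row rank of $X_-$.} Suppose some $K$ renders $A + BK$ stable for every $(A, B, E) \in \Sigma_w$ but $X_-$ is row-rank deficient. Pick a nonzero $c \in \mathbb{R}^n$ with $c^\top X_- = 0$ and fix any $(A, B, E) \in \Sigma_w$. For every $v \in \mathbb{R}^n$ the triple $(A + v c^\top, B, E)$ also lies in $\Sigma_w$, because $v c^\top X_- = 0$; hence $A + BK + v c^\top$ must be stable. Taking $v = \alpha c$ drives $\operatorname{tr}(A + BK + v c^\top) = \operatorname{tr}(A + BK) + \alpha \|c\|^2 \to \infty$ as $\alpha \to \infty$, contradicting Schur stability.

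\emph{Necessity -- existence of $X_-^\dagger$.} This is the main obstacle. Let $\Pi^\perp$ denote the orthogonal projection in $\mathbb{R}^\tau$ onto $\ker W_-$, and set $\tilde X_\pm := X_\pm \Pi^\perp$ and $\tilde U_- := U_- \Pi^\perp$. Using that the equation $E W_- = Y$ is solvable in $E$ if and only if each row of $Y$ lies in the row space of $W_-$, one checks that the projection of $\Sigma_w$ onto its $(A, B)$-coordinates equals $\{(A, B) : \tilde X_+ = A \tilde X_- + B \tilde U_-\}$. Stabilization of every $(A, B, E) \in \Sigma_w$ by $K$ therefore translates to stabilization of this noise-free system set, so the noise-free analogue of Proposition~\ref{prop_K_noise}, following \cite{Henk2020ifac}, produces a right inverse $\tilde X_-^\dagger$ of $\tilde X_-$ with $\tilde X_+ \tilde X_-^\dagger$ stable and $K = \tilde U_- \tilde X_-^\dagger$. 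Setting $X_-^\dagger := \Pi^\perp \tilde X_-^\dagger$ then gives $X_- X_-^\dagger = I$, $W_- X_-^\dagger = 0$, $X_+ X_-^\dagger = \tilde X_+ \tilde X_-^\dagger$ stable, and $U_- X_-^\dagger = K$, completing both the biconditional and the moreover statement. The delicate point is verifying the correspondence between right inverses of $\tilde X_-$ and right inverses of $X_-$ annihilating $W_-$, which is where full row rank of $X_-$ is essential.
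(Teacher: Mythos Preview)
The paper does not supply its own proof of Proposition~\ref{prop_K_noise}; it quotes the result from \cite[Lemma~12]{Henk2020ifac} and moves on. Your argument is therefore not being compared against anything in the paper itself.

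That said, your proof is essentially correct. The sufficiency direction and the rank argument are standard. Your reduction for necessity---projecting the data onto $\ker W_-$ via $\Pi^\perp$ so that the $(A,B)$-projection of $\Sigma_w$ becomes a noise-free data set, then invoking the disturbance-free result and pulling the right inverse back through $\Pi^\perp$---is clean and works exactly as you describe. Two minor remarks: (i) your separate argument that $X_-$ has full row rank is in fact redundant, since the noise-free result applied to the tilded data already yields that $\tilde X_- = X_-\Pi^\perp$ has full row rank, and $\operatorname{rank}(X_-\Pi^\perp)\le\operatorname{rank}(X_-)$ forces $X_-$ to have full row rank as well; (ii) the noise-free analogue you invoke is stated in the paper as a consequence of Proposition~\ref{prop_K_noise} (and attributed to \cite[Theorem~16]{Henk2020data}), so to avoid any appearance of circularity you should cite the independent proof in \cite{Henk2020data} rather than \cite{Henk2020ifac}.
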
 
	
Similar results can also be obtained for the  special case that in \eqref{system_noise} the external disturbance $w =0$, i.e. the system \eqref{system_noise} is disturbance-free  \cite[Theorem 16]{Henk2020data}.

	\section{Data-driven  output synchronization for multi-agent systems}\label{section_main_result}

	In this section, we will address Problem \ref{prob_noise}. More specifically, we will  provide  necessary and sufficient conditions under which the data   $(U_{i-}, W_{i-}, X_i)$ of the followers \eqref{follower_disturb} are  informative for output synchronization, and we will also provide a design method for  computing  
  protocols \eqref{protocol} directly from data that achieve output synchronization.

	Before proceeding, we   first introduce the following notion of data informativity.
	\begin{defn}\label{defn_data_regulation_noise}
		The data $(U_{i-}, W_{i-}, X_i)$ are  informative for output regulation if  there exist common solutions $\Pi_i$ and $\Gamma_i$ to \eqref{reg_1} and \eqref{reg_2} for all $({A}_i, {B}_i, {E}_i)\in \Sigma_{w, i}$, $i=1,2,\ldots,N$.
	\end{defn}
	
	In the following lemma, we     provide necessary and sufficient conditions under which  the data $(U_{i-}, W_{i-}, X_i)$ are informative for output regulation.

	\begin{lem}\label{lem_noise}
	Suppose that the    data $(U_{i-}, W_{i-}, X_i)$ of the followers \eqref{follower_disturb} are informative for stabilization by state feedback,   respectively.
		Then the data  $(U_{i-}, W_{i-}, X_i)$ are informative for output regulation if and only if there exist matrices $M_i$ satisfying the linear equations
		\begin{align}
			X_{i+} M_{i} - X_{i-} M_i S& =0, \label{first_disturb}\\
			W_{i-}M_{i} &=0, \label{second_disturb}\\
			C_iX_{i-}M_i  + D_i U_{i-} M_i &=R,\qquad i =1,2,\ldots,N. \label{third_disturb}
		\end{align}
	\end{lem}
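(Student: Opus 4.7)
The plan is to split the equivalence into its two directions, with the ``if'' direction being a direct verification and the ``only if'' direction relying on the affine structure of $\Sigma_{w,i}$. For the ``if'' direction, given $M_i$ satisfying \eqref{first_disturb}--\eqref{third_disturb}, I would set $\Pi_i := X_{i-}M_i$ and $\Gamma_i := U_{i-}M_i$. For any $(A_i,B_i,E_i)\in\Sigma_{w,i}$, the defining relation from \eqref{follower_noise_set} combined with \eqref{second_disturb} gives $A_i\Pi_i + B_i\Gamma_i = X_{i+}M_i - E_iW_{i-}M_i = X_{i+}M_i$; applying \eqref{first_disturb} then yields $A_i\Pi_i + B_i\Gamma_i = X_{i-}M_iS = \Pi_iS$, which is \eqref{reg_1}, while \eqref{third_disturb} is exactly \eqref{reg_2}. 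Hence the data are informative for output regulation.

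For the converse, suppose common matrices $\Pi_i,\Gamma_i$ satisfy \eqref{reg_1}--\eqref{reg_2} for every $(A_i,B_i,E_i)\in\Sigma_{w,i}$. Since $\Sigma_{w,i}$ is a non-empty affine subspace, its differences $(\Delta A,\Delta B,\Delta E)$ are precisely the triples with $\Delta A\,X_{i-}+\Delta B\,U_{i-}+\Delta E\,W_{i-}=0$. Subtracting \eqref{reg_1} at two elements of $\Sigma_{w,i}$ forces $\Delta A\,\Pi_i + \Delta B\,\Gamma_i = 0$ for every such triple. Arguing row-by-row, this is equivalent to the dual statement: for every row vector $(\alpha,\beta,\gamma)$ with $\alpha X_{i-}+\beta U_{i-}+\gamma W_{i-}=0$, one has $\alpha\Pi_i + \beta\Gamma_i + \gamma\cdot 0 = 0$. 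By kernel--image duality this is exactly
\[
\begin{bmatrix}\Pi_i\\ \Gamma_i\\ 0\end{bmatrix} \in \mathrm{im}\begin{bmatrix} X_{i-}\\ U_{i-}\\ W_{i-}\end{bmatrix},
\]
which supplies $M_i$ with $X_{i-}M_i=\Pi_i$, $U_{i-}M_i=\Gamma_i$ and $W_{i-}M_i=0$. Equations \eqref{second_disturb} and \eqref{third_disturb} are then immediate, and \eqref{first_disturb} follows from $X_{i+}M_i = \bar A_i\Pi_i + \bar B_i\Gamma_i = \Pi_iS = X_{i-}M_iS$, using \eqref{reg_1} applied to the true system $(\bar A_i,\bar B_i,\bar E_i)\in\Sigma_{w,i}$.

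The main obstacle is the converse direction, specifically the passage from ``the regulator equations are solvable for every element of the affine family $\Sigma_{w,i}$'' to ``a single data-based certificate $M_i$ exists''; the argument is elementary but must be set up carefully in terms of the left kernel of the stacked data matrix $[X_{i-}^\top\ U_{i-}^\top\ W_{i-}^\top]^\top$. The informativity-for-stabilization hypothesis is not strictly needed for the linear-algebraic step above, but it guarantees that $X_{i-}$ has full row rank (via Proposition~\ref{prop_K_noise}) and keeps the statement compatible with the overall setting of Problem~\ref{prob_noise}.
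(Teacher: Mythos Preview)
Your proposal is correct and follows essentially the same route as the paper: both directions define $\Pi_i=X_{i-}M_i$, $\Gamma_i=U_{i-}M_i$ for ``if'', and for ``only if'' both exploit the affine structure of $\Sigma_{w,i}$ (the paper via the set $\Sigma_{w,i}^0$, you via differences of elements) to obtain the left-kernel inclusion and hence the image inclusion that produces $M_i$. Your closing remark is also accurate: the paper invokes Proposition~\ref{prop_K_noise} and the fact $A_{i0}+B_{i0}K_i=0$ from \cite{Henk2020ifac} at the start of its proof, but these are not actually used in the argument, so the stabilization hypothesis plays no role in the linear-algebraic equivalence itself.
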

	\begin{proof}
		Suppose that the data $(U_{i-}, W_{i-}, X_i)$ are informative for stabilization by state feedback, and let $K_i$ be a feedback gain such that $A_i + B_i K_i$ is stable for all $(A_i, B_i,E_i)\in \Sigma_{w,i}$.
		Then it follows from Proposition \ref{prop_K_noise} that the feedback gain $K_i$ can be  taken as $K_i = U_{i-} X_{i-}^\dagger$, where $X_{i-}^\dagger$ is a right inverse of $X_{i-}$ such that $X_{i+}X_{i-}^\dagger$ is   stable and $W_{i-} X_{i-}^\dagger =0$.
		Note that 
		\begin{equation}\label{closed_loop_noise}
			A_i +B_i K_i = X_{i+}X_{i-}^\dagger.
		\end{equation}
		
		$(\Leftarrow)$
		Suppose that there exist solutions $M_i$ to the  equations \eqref{first_disturb}, \eqref{second_disturb} and \eqref{third_disturb}.
		Define $\Pi_i = X_{i-} M_i$ and $\Gamma_i = U_{i-} M_i$, and take $K_i = U_{i-}X_{i-}^\dagger$. Recall that the matrices $ {A}_i$, ${B}_i$ and ${E}_i$ satisfy the condition \eqref{follower_noise_set} for all $(A_i, B_i, E_i) \in \Sigma_{w,i}$, the equations \eqref{first_disturb} can then be written as
		\begin{equation*}
			\begin{aligned}
				({A}_i X_{i-} + {B}_i U_{i-} + E_i W_{i-})M_{i} - X_{i-} M_i S &=0\\
				\Leftrightarrow\qquad{A}_i X_{i-} M_{i}  + {B}_i U_{i-}   M_{i} - X_{i-} M_i S&=0\\
				\Rightarrow\qquad	{A}_i \Pi_i   + {B}_i \Gamma_i -\Pi_i S &=0 ,
			\end{aligned}
		\end{equation*}
		where we have  used \eqref{second_disturb}.
		It   follows that \eqref{reg_1} has common solutions   $\Pi_i$ and $\Gamma_i$ for all $(A_i, B_i, E_i) \in \Sigma_{w,i}$.
		
		For equations \eqref{third_disturb},  use again $\Pi_i = X_{i-} M_i$, $\Gamma_i = U_{i-} M_i$ and $K_i = U_{i-}X_{i-}^\dagger$, it  follows  that
	   $\Pi_i$ and $\Gamma_i$ are also solutions of 	\eqref{reg_2}  for all $(A_i, B_i, E_i) \in \Sigma_{w,i}$.

		$(\Rightarrow)$  
		Suppose that the data  $(U_{i-}, W_{i-}, X_i)$  are informative for output regulation, then according to Definition \ref{defn_data_regulation_noise}, there exist common solutions $\Pi_i$ and $\Gamma_i$ to \eqref{reg_1} and \eqref{reg_2} for all $({A}_i, {B}_i, {E}_i)\in \Sigma_{w, i}$, $i=1,2,\ldots,N$.

		Define \[
		\Sigma_{w,i}^0 = \left\{(A_{i0}, B_{i0}, E_{i0}) \mid    0= 
		\begin{bmatrix}
			A_{i0} & B_{i0} & E_{i0}
		\end{bmatrix}
		\begin{bmatrix}
			X_{i-}\\
			U_{i-}\\
			W_{i-}
		\end{bmatrix} \right\}.
		\]
		It has been shown in the proof of \cite[Lemma 12]{Henk2020ifac} that
		$A_{i0} + B_{i0} K_i =0$ for all $(A_{i0} , B_{i0},E_{i0}) \in \Sigma_{w,i}^0$. 
		Subsequently, according to the definitions of $\Sigma_{w,i}$ and $\Sigma_{w,i}^0$,  we have $(A_i + A_{i0}, B_i + B_{i0}, E_i) \in \Sigma_{w,i}$.
		Since \eqref{reg_1} and \eqref{reg_2} have common solutions for all $(A_i, B_i, E_i) \in \Sigma_{w,i}$, then   $(A_i + A_{i0}, B_i + B_{i0}, E_i)$ also satisfies  \eqref{reg_1} and \eqref{reg_2}. This implies that
		\begin{align*}
			(	A_i +A_{i0})\Pi_i  + (B_i +B_{i0})\Gamma_i  = \Pi_i S .
		\end{align*}  
		Since also \eqref{reg_1} holds,   it follows that 
		\begin{align*}
			\begin{bmatrix}
				A_{i0} & B_{i0} & E_{i0}
			\end{bmatrix}
			\begin{bmatrix}
				\Pi_i \\ 
				\Gamma_i \\
				0
			\end{bmatrix}  =0,
		\end{align*}
		for all $(	A_{i0} , B_{i0}, E_{i0}) \in \Sigma_{w,i}^0$. 
		This implies that
		\[
		{\rm ker}
		\begin{bmatrix}
			X_{i-}^\top &U_{i-}^\top  & W_{i-}^\top 
		\end{bmatrix}
		\subseteq
		{\rm ker} 
		\begin{bmatrix}
			\Pi_i^\top  &
			\Gamma_i^\top &
			0
		\end{bmatrix} ,
		\]
		which is equivalent to
		\[ 
		{\rm im} 
		\begin{bmatrix}
			\Pi_i \\
			\Gamma_i\\
			0
		\end{bmatrix} 
		\subseteq
		{\rm im}
		\begin{bmatrix}
			X_{i-} \\U_{i-} \\ W_{i-}
		\end{bmatrix}.
		\]
		As a consequence, there exists   matrices $M_i$ such that 
		\begin{equation} \label{equation_noise}
			\begin{bmatrix}
				\Pi_i \\
				\Gamma_i \\
				0
			\end{bmatrix} 
			=
			\begin{bmatrix}
				X_{i-} \\U_{i-} \\ W_{i-}
			\end{bmatrix}M_i, \quad i=1,2,\ldots,N.
		\end{equation}
		By substituting \eqref{equation_noise}, \eqref{closed_loop_noise}  and \eqref{follower_noise_set} into \eqref{reg_1} and \eqref{reg_2}, we obtain \eqref{first_disturb}, \eqref{second_disturb} and \eqref{third_disturb}. 
		This completes the proof.
	\end{proof}
	\begin{rem}
		Note that Lemma \ref{lem_noise} considers a  version of the output regulation problem with known disturbances, which is slightly different from the results in \cite[Theorem 8]{Harry2020regulator}. Although the proof of Lemma \ref{lem_noise}  is similar to that of  \cite[Theorem 8]{Harry2020regulator},  we include a proof  to make this paper self-contained.
	\end{rem}
	
	Based on Lemma \ref{lem_noise}, we  obtain the following main result.
	\begin{thm}\label{thm_output_synch}
		Let Assumptions \ref{matrix_S_data} and \ref{assum_graph} hold.
 	Then the data $(U_{i-}, W_{i-}, X_i)$ are informative for output synchronization if and only if,  for $i=1,2,\ldots,N$,  the following two statements hold:
 	\begin{enumerate}
 		\item  There exists a right-inverse $X_{i-}^\dagger$ of $X_{i-}$  such that $X_{i+}X_{i-}^\dagger$ is stable and $W_{i-}X_{i-}^\dagger =0$;
 		\item there exist matrices $M_i$ satisfying the linear equations \eqref{first_disturb}, \eqref{second_disturb} and \eqref{third_disturb}. 
 	\end{enumerate}

		In this case, 
		a protocol  \eqref{protocol} can be found as follows: 
		take $F$    such that $ S - \lambda_i F$ are  stable, where $\lambda_i$ are the eigenvalues of $(I_N + \mathcal{D} +G)^{-1}(L+G)$, and define $K_i = U_{i-} X_{i-}^\dagger$, $\Pi_i = X_{i-} M_i$ and $\Gamma_i = U_{i-} M_i$.
	\end{thm}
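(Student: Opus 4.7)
The plan is to obtain Theorem \ref{thm_output_synch} by combining the model-based characterization of output-synchronizing protocols in Proposition \ref{model_based_thm} with the data-based characterizations of informativity for stabilization (Proposition \ref{prop_K_noise}) and informativity for output regulation (Lemma \ref{lem_noise}). The protocol \eqref{protocol} has a clear block structure: the gain $F$ drives the observer-like coordination among the $v_i$, whereas each $K_i$ and the pair $(\Pi_i,\Gamma_i)$ govern the local error $x_i-\Pi_i v_i$ and the internal model, respectively. This decomposition is already reflected in the statement of Proposition \ref{model_based_thm}, and my strategy is to lift that decomposition to the data setting.

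For sufficiency ($\Leftarrow$), assume conditions (1) and (2). First, by Proposition \ref{prop_K_noise}, condition (1) implies that the data $(U_{i-},W_{i-},X_i)$ are informative for stabilization by state feedback, and that $K_i := U_{i-}X_{i-}^\dagger$ renders $A_i+B_iK_i$ stable for every $(A_i,B_i,E_i)\in\Sigma_{w,i}$. Second, using the matrices $M_i$ supplied by (2), set $\Pi_i:=X_{i-}M_i$ and $\Gamma_i:=U_{i-}M_i$; the $(\Leftarrow)$ direction of Lemma \ref{lem_noise} shows that these $\Pi_i,\Gamma_i$ solve the regulator equations \eqref{reg_1}--\eqref{reg_2} for every system in $\Sigma_{w,i}$. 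Pick $F$ such that $S-\lambda_i F$ is stable for all $i$ (possible under Assumption \ref{assum_graph}, e.g.\ by the Riccati-based construction in \cite{HENGSTERMOVRIC2013414}). Proposition \ref{model_based_thm}, applied to any $(A_i,B_i,E_i)\in\Sigma_{w,i}$, then yields output synchronization, so the data are informative for output synchronization in the sense of Definition \ref{defn_outputsynch}.

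For necessity ($\Rightarrow$), suppose a protocol of the form \eqref{protocol}, with gains $F,K_i,\Pi_i,\Gamma_i$, achieves output synchronization for every $(A_i,B_i,E_i)\in\Sigma_{w,i}$ and every initial condition. The model-based analysis underlying Proposition \ref{model_based_thm} shows that the local error $e_i:=x_i-\Pi_i v_i$ evolves (after absorbing the coordination dynamics of $v_i$) according to a system whose autonomous part is $A_i+B_iK_i$; convergence of $e_i$ to zero from arbitrary initial conditions therefore forces $A_i+B_iK_i$ to be Schur for every system in $\Sigma_{w,i}$. Hence the data are informative for stabilization by state feedback, and Proposition \ref{prop_K_noise} gives condition (1) together with $K_i=U_{i-}X_{i-}^\dagger$. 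The same model-based reasoning then forces the existence of $\Pi_i,\Gamma_i$ solving \eqref{reg_1}--\eqref{reg_2} simultaneously for all systems in $\Sigma_{w,i}$, i.e.\ informativity for output regulation. The $(\Rightarrow)$ direction of Lemma \ref{lem_noise} delivers the matrices $M_i$ of condition (2), and the displayed construction of the protocol is precisely what was used in the sufficiency direction.

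The main obstacle, as I see it, is the necessity step: one must argue that \emph{any} output-synchronizing protocol, for every admissible system, forces both the Schur stability of $A_i+B_iK_i$ and the solvability of the regulator equations uniformly over $\Sigma_{w,i}$. The stability claim is not quite a direct quotation of Proposition \ref{model_based_thm} because that result presupposes stabilizing gains; so one must genuinely inspect the closed-loop error dynamics to isolate $A_i+B_iK_i$ as the driver of the local error, and use that Assumption \ref{matrix_S_data} rules out decaying exosignals from masking instability. Once this is done, the remaining steps are essentially bookkeeping via Proposition \ref{prop_K_noise} and Lemma \ref{lem_noise}.
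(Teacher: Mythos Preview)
Your proposal is correct and follows essentially the same route as the paper: reduce informativity for output synchronization to the conjunction of informativity for stabilization (Proposition~\ref{prop_K_noise}) and informativity for output regulation (Lemma~\ref{lem_noise}), and then invoke the model-based Proposition~\ref{model_based_thm} to assemble the protocol. If anything, you are more careful than the paper's own proof, which simply asserts that the equivalence ``follows directly'' from these three results together with Definition~\ref{defn_outputsynch}; your explicit treatment of the necessity direction (isolating $A_i+B_iK_i$ from the closed-loop error dynamics and using the built-in requirement $x_i-\Pi_i v_i\to 0$ of Definition~\ref{defn_outsynch}) fills in what the paper leaves implicit.
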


	\begin{proof}
		Suppose   Assumption \ref{assum_graph} hold. 
		It then follows directly from Lemma \ref{lem_noise}, Proposition \ref{prop_K_noise} and Definition \ref{defn_outputsynch} that the data $(U_{i-}, W_{i-}, X_i)$ are informative for output synchronization if and only if the data $(U_{i-}, W_{i-}, X_i)$  are informative for stabilization by state feedback and informative for output regulation. Recalling Definitions \ref{defn_stable_noise} and \ref{defn_data_regulation_noise}, the first part of this theorem is then proven.

		Next, it  follows   from Lemma \ref{lem_noise}, Proposition \ref{prop_K_noise} and  Proposition \ref{model_based_thm} that 
		an output synchronizing protocol  \eqref{protocol}  can be found by taking $K_i = U_{i-} X_{i-}^\dagger$, $\Pi_i = X_{i-} M_i$, $\Gamma_i = U_{i-} M_i$, and taking $F$    such that $ S - \lambda_i F$ are  stable, where $\lambda_i$ are the eigenvalues of $(I_N + \mathcal{D} +G)^{-1}(L+G)$.
	\end{proof}
	
		Again, we note that there exist methods to compute $F$   such that $ S - \lambda_i F$  is  stable for $i =1,2,\ldots,N$. For instance, in \cite{HENGSTERMOVRIC2013414} such a simultaneous stabilizing gain  $F$ is obtained by solving discrete-time  Riccati inequalities.

		In the sequel, we turn our attention to the special case that in \eqref{follower_disturb}  the external disturbance $w_i =0$, i.e., the multi-agent system is disturbance-free. 
We say that 	  data $(U_{i-},  X_i)$ are  {\em informative for output regulation} if  there exist common solutions $\Pi_i$ and $\Gamma_i$ to \eqref{reg_1} and \eqref{reg_2} for all $({A}_i, {B}_i) \in \Sigma_{i}$, $i=1,2,\ldots,N$.
		We will  again provide necessary and sufficient  conditions under which the data  $(U_{i-}, X_i)$ are informative for output synchronization, and we will also provide a design method for  computing 
		protocols \eqref{protocol} directly from data  that achieve output synchronization.

		The following lemma states   under what conditions the data $(U_{i-}, X_{i})$ are informative for output regulation.
		\begin{lem}\label{lem_equations}
			Suppose that the data $(U_{i-}, X_{i})$ are informative for stabilization by state feedback. 
			%
			Then the data are informative for output regulation if and only if there exist matrices  $M_i$ satisfying the linear equations
			\begin{align}
				X_{i+} M_{i} - X_{i-} M_i S& =0, \label{first}\\
				C_iX_{i-}M_i  + D_i U_{i-} M_i &=R,\quad i =1,2,\ldots,N .\label{second}
			\end{align}

		\end{lem}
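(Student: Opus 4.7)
My plan is to mirror the proof of Lemma~\ref{lem_noise} line by line, simply deleting everything involving the disturbance data $W_{i-}$ and the disturbance matrix $E_i$. Under the stabilization-by-state-feedback assumption, Proposition~\ref{prop_K_noise} (adapted to the noise-free case, cf.\ \cite[Theorem 16]{Henk2020data}) guarantees that $X_{i-}$ has full row rank and that there is a right inverse $X_{i-}^\dagger$ of $X_{i-}$ for which $X_{i+}X_{i-}^\dagger$ is stable; the feedback gain is $K_i=U_{i-}X_{i-}^\dagger$ and satisfies $A_i+B_iK_i=X_{i+}X_{i-}^\dagger$ for every $(A_i,B_i)\in\Sigma_i$.

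For the sufficiency direction, I would suppose that $M_i$ solves \eqref{first} and \eqref{second}, and set $\Pi_i:=X_{i-}M_i$ and $\Gamma_i:=U_{i-}M_i$. Using the identity $X_{i+}=A_iX_{i-}+B_iU_{i-}$ valid for every $(A_i,B_i)\in\Sigma_i$, equation \eqref{first} rewrites as $A_i\Pi_i+B_i\Gamma_i-\Pi_i S=0$, which is exactly \eqref{reg_1}. Equation \eqref{second} immediately yields \eqref{reg_2}. Hence $\Pi_i,\Gamma_i$ are common solutions of the regulator equations for all elements of $\Sigma_i$.

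For the necessity direction, assume common $\Pi_i,\Gamma_i$ solve \eqref{reg_1}--\eqref{reg_2} for every $(A_i,B_i)\in\Sigma_i$. Introduce the homogeneous set
\[
\Sigma_{i}^{0}:=\left\{(A_{i0},B_{i0})\;\Big|\;0=\begin{bmatrix}A_{i0}&B_{i0}\end{bmatrix}\begin{bmatrix}X_{i-}\\ U_{i-}\end{bmatrix}\right\},
\]
and note that if $(A_i,B_i)\in\Sigma_i$ and $(A_{i0},B_{i0})\in\Sigma_{i}^{0}$, then $(A_i+A_{i0},B_i+B_{i0})\in\Sigma_i$. Substituting this perturbed pair into \eqref{reg_1} and subtracting the original \eqref{reg_1} will force
\[
\begin{bmatrix}A_{i0}&B_{i0}\end{bmatrix}\begin{bmatrix}\Pi_i\\ \Gamma_i\end{bmatrix}=0\quad\text{for all }(A_{i0},B_{i0})\in\Sigma_{i}^{0}.
\]
By the standard duality between $\Sigma_{i}^{0}$ and the row space of $\begin{bmatrix}X_{i-}^{\top}&U_{i-}^{\top}\end{bmatrix}$, this is equivalent to the kernel inclusion $\ker\begin{bmatrix}X_{i-}^{\top}&U_{i-}^{\top}\end{bmatrix}\subseteq\ker\begin{bmatrix}\Pi_i^{\top}&\Gamma_i^{\top}\end{bmatrix}$, i.e.\ to the image inclusion ${\rm im}\begin{bmatrix}\Pi_i\\ \Gamma_i\end{bmatrix}\subseteq{\rm im}\begin{bmatrix}X_{i-}\\ U_{i-}\end{bmatrix}$. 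I would then pick $M_i$ with $\Pi_i=X_{i-}M_i$ and $\Gamma_i=U_{i-}M_i$, and substitute back into \eqref{reg_1}--\eqref{reg_2}, again using $A_iX_{i-}+B_iU_{i-}=X_{i+}$, to recover \eqref{first} and \eqref{second}.

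The only step that is not bookkeeping is the image-inclusion argument in the necessity direction; the main obstacle is verifying that $\Sigma_{i}^{0}$ exhausts the annihilator of $\begin{bmatrix}X_{i-}^{\top}&U_{i-}^{\top}\end{bmatrix}$ so that the perturbation argument truly yields the kernel inclusion. This is exactly the content used in \cite[Lemma 12]{Henk2020ifac} and is the only place where the disturbance-free case differs cosmetically from Lemma~\ref{lem_noise}; once it is established, the rest of the proof is a direct transcription of the earlier argument.
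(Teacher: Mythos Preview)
Your proposal is correct and follows the same route as the paper: the paper simply says that Lemma~\ref{lem_equations} is a direct consequence of Lemma~\ref{lem_noise} by setting $W_{i-}=0$, and what you have written is exactly that specialization made explicit. Your concern about $\Sigma_i^0$ exhausting the annihilator of $\begin{bmatrix}X_{i-}^{\top}&U_{i-}^{\top}\end{bmatrix}$ is not an obstacle at all---it is an immediate linear-algebra fact, since any vector in the left null space of $\begin{bmatrix}X_{i-}\\ U_{i-}\end{bmatrix}$ can be placed as a row of some $\begin{bmatrix}A_{i0}&B_{i0}\end{bmatrix}\in\Sigma_i^0$.
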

		
		Lemma \ref{lem_equations} is a direct consequence of Lemma \ref{lem_noise} by letting $W_{i-} =0$.	Based on Lemma \ref{lem_equations}, we    have the following result.
		\begin{prop}\label{thm_noise_free}
			Let Assumptions \ref{matrix_S_data} and \ref{assum_graph} hold. 
	The data $(U_{i-}, X_i)$ are  informative for output synchronization  if and only if, for $i=1,2,\ldots,N$, there   exists a right-inverse $X_{i-}^\dagger$ of $X_{i-}$  such that $X_{i+}X_{i-}^\dagger$ is stable,  and, in addition, there exist matrices $M_i$ satisfying the linear equations \eqref{first} and \eqref{second}.
			
			In this case,  a protocol  \eqref{protocol} can be found as follows:  take $F$    such that $ S - \lambda_i F$ are  stable, where $\lambda_i$ are the eigenvalues of $(I_N + \mathcal{D} +G)^{-1}(L+G)$, and define $K_i = U_{i-} X_{i-}^\dagger$, $\Pi_i = X_{i-} M_i$ and $\Gamma_i = U_{i-} M_i$.
		\end{prop}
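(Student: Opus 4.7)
The plan is to prove Proposition \ref{thm_noise_free} by specialising the proof of Theorem \ref{thm_output_synch} to the disturbance-free case $W_{i-}=0$. Concretely, I would combine three ingredients already in hand: Proposition \ref{model_based_thm}, which reduces model-based output synchronization to simultaneous stabilization of $A_i+B_iK_i$ together with solvability of the regulator equations \eqref{reg_1}--\eqref{reg_2}; the disturbance-free analog of Proposition \ref{prop_K_noise} (as given by \cite[Theorem 16]{Henk2020data}), which characterises informativity for stabilization by state feedback via the existence of a right-inverse $X_{i-}^{\dagger}$ of $X_{i-}$ with $X_{i+}X_{i-}^{\dagger}$ stable and parameterises the stabilizing gains as $K_i=U_{i-}X_{i-}^{\dagger}$; and Lemma \ref{lem_equations}, which characterises informativity for output regulation by the solvability of \eqref{first}--\eqref{second} and supplies the common solutions $\Pi_i=X_{i-}M_i$, $\Gamma_i=U_{i-}M_i$.

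For the necessity direction, I would first observe that if there exists a single protocol of the form \eqref{protocol} that achieves output synchronization for every $(A_i,B_i)\in\Sigma_i$, then applying Proposition \ref{model_based_thm} to each such system forces, on the one hand, $A_i+B_iK_i$ to be stable for every element of $\Sigma_i$ with the common gain $K_i$ from the protocol, and on the other hand, the existence of common $\Pi_i,\Gamma_i$ solving \eqref{reg_1}--\eqref{reg_2} for every element of $\Sigma_i$. The first is precisely informativity for stabilization by state feedback, which by the disturbance-free analog of Proposition \ref{prop_K_noise} delivers condition (1); the second is informativity for output regulation, which by Lemma \ref{lem_equations} delivers condition (2).

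For the sufficiency direction, assume (1) and (2) hold and construct the protocol as stated: pick $F$ so that $S-\lambda_i F$ is stable for all $i$ (feasible under Assumption \ref{assum_graph} by the Riccati-based method in \cite{HENGSTERMOVRIC2013414}), set $K_i=U_{i-}X_{i-}^{\dagger}$ with the right-inverse from (1), and set $\Pi_i=X_{i-}M_i$, $\Gamma_i=U_{i-}M_i$ with $M_i$ from (2). The disturbance-free analog of Proposition \ref{prop_K_noise} guarantees that this $K_i$ stabilizes $A_i+B_iK_i$ for every $(A_i,B_i)\in\Sigma_i$, while Lemma \ref{lem_equations} guarantees that $(\Pi_i,\Gamma_i)$ satisfy the regulator equations for every $(A_i,B_i)\in\Sigma_i$. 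Proposition \ref{model_based_thm} then yields output synchronization uniformly over $\Sigma_i$, which is exactly informativity for output synchronization in the sense of the adapted Definition \ref{defn_outputsynch}.

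I do not anticipate any substantive obstacle: the reasoning is essentially a bookkeeping exercise that parallels the proof of Theorem \ref{thm_output_synch} with the row $W_{i-}=0$ of \eqref{follower_noise_set} (and the corresponding equation \eqref{second_disturb}) removed. The only point that warrants care is to invoke the \emph{disturbance-free} variant of Proposition \ref{prop_K_noise} rather than Proposition \ref{prop_K_noise} itself, since the latter includes the extraneous constraint $W_{i-}X_{i-}^{\dagger}=0$. Provided that citation is correctly made to \cite[Theorem 16]{Henk2020data}, the argument reduces to invoking Lemma \ref{lem_equations} and Proposition \ref{model_based_thm} in sequence.
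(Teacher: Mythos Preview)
Your proposal is correct and follows essentially the same route as the paper: the paper simply states that Proposition \ref{thm_noise_free} follows directly from Theorem \ref{thm_output_synch} (by setting $W_{i-}=0$, so that the constraint $W_{i-}X_{i-}^{\dagger}=0$ and equation \eqref{second_disturb} become vacuous), and your write-up is a careful unpacking of exactly that specialization. Your explicit remark that one should invoke the disturbance-free stabilization result \cite[Theorem 16]{Henk2020data} rather than Proposition \ref{prop_K_noise} itself is a helpful clarification, but not a departure from the paper's intended argument.
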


		The proof of Proposition \ref{thm_noise_free} follows directly from Theorem \ref{thm_output_synch} and is omitted here.

	\section{Illustrative example}\label{sec_simulation}
	In this section, we will use a simulation example to illustrate our  protocols proposed in Proposition \ref{thm_noise_free}. Consider a disturbance-free leader-follower multi-agent system, consisting of one leader and  nine  followers. The dynamic of the leader is given by \eqref{leader_data} and \eqref{leader_output}, where
$$
		S = 
		\begin{bmatrix}
			0& 1 \\1 & 0
		\end{bmatrix},\quad
		R = 
		\begin{bmatrix}
			1 & 0
		\end{bmatrix}.
$$
	The pair $(R,S)$ is observable. By letting the initial state $x_{r0} = [1\ 1]^\top$, the output of the leader is a constant $y_r = 1$.
	The `true' dynamics of the  nine  followers are {\em unknown} but are represented by 
	\eqref{follower_disturb} with $\bar{E}_i = 0$ and \eqref{follower_output} with
	\begin{align*}
		&\bar{A}_1 =\bar{A}_4 =\bar{A}_7=
		\begin{bmatrix}
			0 & 1 \\1 & 1
		\end{bmatrix},\quad
		\bar{B}_1 =\bar{B}_4 =\bar{B}_7 =
		\begin{bmatrix}
			1 \\ 0
		\end{bmatrix},\\
	&	C_1 =C_4 =C_7 =
		\begin{bmatrix}
			1 & 1
		\end{bmatrix},\quad
		D_1 = D_4 = D_7 = 2,\\
		&\bar{A}_2 =\bar{A}_5 =\bar{A}_8 =
		\begin{bmatrix}
			0 & 1 \\1 & -1
		\end{bmatrix},\quad
		\bar{B}_2 =\bar{B}_5 =\bar{B}_8 =
		\begin{bmatrix}
			1 \\ 0
		\end{bmatrix},\\
	&	C_2 =C_5 =C_8 =
		\begin{bmatrix}
			-1 & 1
		\end{bmatrix},\quad
		D_2= D_5= D_8= 2,\\
		&\bar{A}_3 =\bar{A}_6 =\bar{A}_9 =
		\begin{bmatrix}
			0 & -1 \\1 & 0
		\end{bmatrix},\quad
		\bar{B}_3 =\bar{B}_6 =\bar{B}_9 =
		\begin{bmatrix}
			1 \\ 0
		\end{bmatrix},\\
		&C_3 =C_6 =C_9 =
		\begin{bmatrix}
			0 & 1
		\end{bmatrix},\quad
		D_3 = D_6 = D_9 = 0.5.
	\end{align*}
	It is easy to check that the regulator equations \eqref{reg_1} and \eqref{reg_2} have solutions for the matrices $\bar{A}_i$, $\bar{B}_i$, $C_i$ and $D_i$ for $i=1,2,\ldots,9$.
	The multi-agent system will be interconnected by a protocol of the form \eqref{protocol}. We assume that the communication graph between the agents is given as in Figure \ref{graph}. 
	The underlying graph between the leader and the followers  satisfies Assumption \ref{assum_graph}. 
	\begin{figure}[t] 
		\centering
		\begin{tikzpicture}[scale=1]
			\tikzset{VertexStyle1/.style = {shape = circle,
					color=black,
					fill=white!93!black,
					minimum size=0.5cm,
					text = black,
					inner sep = 2pt,
					outer sep = 1pt,
					minimum size = 0.55cm},
				VertexStyle2/.style = {shape = circle,
					color=black,
					fill=black!53!white,
					minimum size=0.5cm,
					text = white,
					inner sep = 2pt,
					outer sep = 1pt,
					minimum size = 0.55cm}
			}
			\node[VertexStyle2,draw](0) at (-3,-1) {$\bf r$};
			\node[VertexStyle1,draw](1) at (-1.5,0) {$\bf 1$};
			\node[VertexStyle1,draw](2) at (0,1) {$\bf 2$};
			\node[VertexStyle1,draw](3) at (1.5,1) {$\bf 3$};	
			\node[VertexStyle1,draw](4) at (3,1) {$\bf 4$};
			\node[VertexStyle1,draw](5) at (4.5,1) {$\bf 5$};			
		\node[VertexStyle1,draw](9) at (0,-1) {$\bf 9$};
		\node[VertexStyle1,draw](8) at (1.5,-1) {$\bf 8$};	
		\node[VertexStyle1,draw](7) at (3,-1) {$\bf 7$};
		\node[VertexStyle1,draw](6) at (4.5,-1) {$\bf 6$};			
			\Edge[ style = {->,> = latex',pos = 0.2},color=black, labelstyle={inner sep=0pt}](0)(1);
			\Edge[ style = {<->,> = latex',pos = 0.2},color=black, labelstyle={inner sep=0pt}](1)(2);
			\Edge[ style = {<->,> = latex',pos = 0.2},color=black, labelstyle={inner sep=0pt}](2)(3);
			\Edge[ style = {<->,> = latex',pos = 0.2},color=black, labelstyle={inner sep=0pt}](3)(4);
			\Edge[ style = {<->,> = latex',pos = 0.2},color=black, labelstyle={inner sep=0pt}](4)(5);
			\Edge[ style = {<->,> = latex',pos = 0.2},color=black, labelstyle={inner sep=0pt}](5)(6);
			\Edge[ style = {<->,> = latex',pos = 0.2},color=black, labelstyle={inner sep=0pt}](6)(7);
			\Edge[ style = {<->,> = latex',pos = 0.2},color=black, labelstyle={inner sep=0pt}](7)(8);
			\Edge[ style = {<->,> = latex',pos = 0.2},color=black, labelstyle={inner sep=0pt}](8)(9);
			\Edge[ style = {<->,> = latex',pos = 0.2},color=black, labelstyle={inner sep=0pt}](9)(1);
			\Edge[ style = {<-,> = latex',pos = 0.2},color=black, labelstyle={inner sep=0pt}](2)(7);
			\Edge[ style = {<-,> = latex',pos = 0.2},color=black, labelstyle={inner sep=0pt}](5)(8);
						\Edge[ style = {->,> = latex',pos = 0.2},color=black, labelstyle={inner sep=0pt}](0)(9);
		\end{tikzpicture}
		\caption{The underlying graph of the communication between the leader and the followers.}
		\label{graph}
	\end{figure}
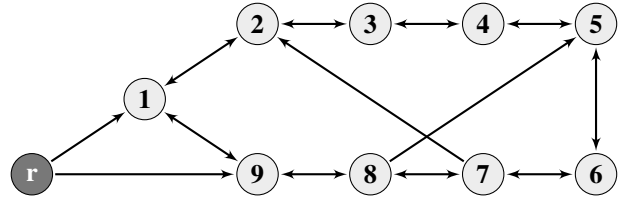
	
	For each follower, we collect  four sets of state data and three sets of input data as follows
  \begin{align*}
		&	X_{1} =X_{4} =X_{7} = 
		\begin{bmatrix}
			1  &   0    & 1 &    1 \\   
			-1 &    0 &    0    & 1 
		\end{bmatrix},\\
	&	U_{1-} = 	U_{4-} = 	U_{7-} = 
		\begin{bmatrix}
			1 & 1 & 1
		\end{bmatrix},\\
		&	X_{2} =X_{5} =X_{8} = 
		\begin{bmatrix}
			1  &   0 &    3 &    -1    \\
			-1    &  2 &   -2    & 5 
		\end{bmatrix},\\ 
	&	U_{2-} = U_{5-} = U_{8-} = 
		\begin{bmatrix}
			1 & 1 & 1
		\end{bmatrix},\\
		&	X_{3} = 	X_{6} = 	X_{9} = 
		\begin{bmatrix}
			1 &    2  &   0   &  -1   \\
			-1  &    1  &   2  &   0
		\end{bmatrix},\\
	&	U_{3-} = U_{6-} = U_{9-} = 
		\begin{bmatrix}
			1 & 1 & 1
		\end{bmatrix}.
	\end{align*}
	It is easy to verify that the data are   informative for stabilization by state feedback, and, using directly these data,  we compute feedback gains 
\begin{align*}
	& K_1 = K_4 = K_7 = 
		\begin{bmatrix}
			-0.3677 &  -1.3560
		\end{bmatrix},\\
	&  K_2 = K_5 = K_8 = 
		\begin{bmatrix}
		 0.4183  & -1.4385
		\end{bmatrix}, \\ 
&	K_3 = K_6 = K_9 = 
		\begin{bmatrix}
			0.0017  &   1.0008
		\end{bmatrix} . 
\end{align*}
	Similarly, we compute solutions $M_i$ to the linear equations \eqref{first}  and \eqref{second},  and obtain
{\small \begin{equation*}
		\begin{aligned}
			&
				M_1 =M_4 =M_7 = \begin{bmatrix}
				0    & 1 \\
				2    &-1 \\
				-1    & 0
			\end{bmatrix},  \ 	M_2 =M_5 =M_8 =
			\begin{bmatrix}
				0.4 &  -1.4 \\
				0.4 &   0.6 \\
				0.2 &    0.8 
			\end{bmatrix},  \\ 
 			&	M_3 =M_6 =M_9 =
			\begin{bmatrix}
				0.6 & -0.1 \\
				-0.3 &  0.3 \\
				0.7 &   -0.2 
			\end{bmatrix}.
		\end{aligned}
	\end{equation*}
}%
	According to Proposition \ref{thm_noise_free}, since the data  $(U_{i-}, X_i)$ are informative for stabilization by state feedback and informative for output regulation,  the data are  also informative for output synchronization. Subsequently, we compute    gain matrices $F$, $\Pi_i$ and $\Gamma_i$.
	It is shown in Figure \ref{synchronization} that the associated protocol indeed achieves output synchronization.
	
	\begin{figure}[t]
			\centering
		\includegraphics[width=\columnwidth]{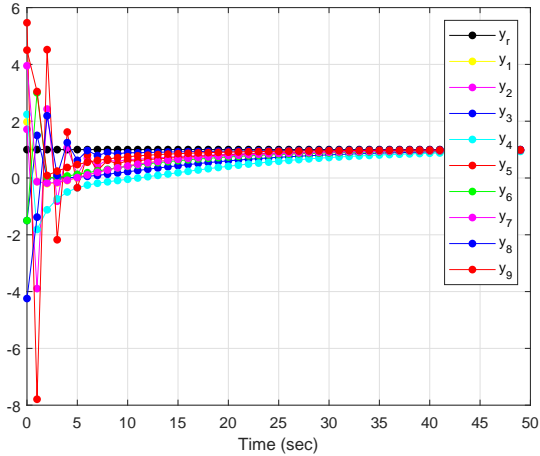}
		\caption{Plots of the output trajectories  $y_{r}$, $y_1$, $y_2,\ldots,y_9$.} \label{synchronization}
	\end{figure}
	
	\section{Conclusions and future work}\label{sec_conclusions}
	In this paper, we have considered an informativity approach to data-driven output synchronization for leader-follower multi-agent systems. We have provided necessary and sufficient data-based conditions for   output synchronization. We  have   provided a design method for computing such distributed output synchronizing protocols directly from  data. We have also extended the results to the special case that the followers are disturbance-free.
	
	As a possibility for future research, we mention the more practical and  challenging situation that the external disturbance  is {\em unknown} \cite{Henk2021slemma}.
It would also be interesting to extend the results in this paper to the case that only input and output data of the followers are available \cite{Henk2020data}.
	%
	\balance
	\bibliographystyle{IEEEtran}        
	\bibliography{data_driven}

\begin{thebibliography}{10}
\providecommand{\url}[1]{#1}
\csname url@samestyle\endcsname
\providecommand{\newblock}{\relax}
\providecommand{\bibinfo}[2]{#2}
\providecommand{\BIBentrySTDinterwordspacing}{\spaceskip=0pt\relax}
\providecommand{\BIBentryALTinterwordstretchfactor}{4}
\providecommand{\BIBentryALTinterwordspacing}{\spaceskip=\fontdimen2\font plus
\BIBentryALTinterwordstretchfactor\fontdimen3\font minus
  \fontdimen4\font\relax}
\providecommand{\BIBforeignlanguage}[2]{{%
\expandafter\ifx\csname l@#1\endcsname\relax
\typeout{** WARNING: IEEEtran.bst: No hyphenation pattern has been}%
\typeout{** loaded for the language `#1'. Using the pattern for}%
\typeout{** the default language instead.}%
\else
\language=\csname l@#1\endcsname
\fi
#2}}
\providecommand{\BIBdecl}{\relax}
\BIBdecl

\bibitem{Olfati-Saber2004}
R.~Olfati-Saber and R.~M. Murray, ``Consensus problems in networks of agents
  with switching topology and time-delays,'' \emph{IEEE Transactions on
  Automatic Control}, vol.~49, no.~9, pp. 1520--1533, 2004.

\bibitem{WIELAND20111068}
P.~Wieland, R.~Sepulchre, and F.~Allgöwer, ``An internal model principle is
  necessary and sufficient for linear output synchronization,''
  \emph{Automatica}, vol.~47, no.~5, pp. 1068--1074, 2011.

\bibitem{Jiao2020H2output}
J.~Jiao, H.~L. Trentelman, and M.~K. Camlibel, ``A suboptimality approach to
  distributed {$H_2$} control by dynamic output feedback,'' \emph{Automatica},
  vol. 121, p. 109164, 2020.

\bibitem{Zhongkui2014}
Z.~Li and Z.~Duan, \emph{Cooperative Control of Multi-Agent Systems: A
  Consensus Region Approach}.\hskip 1em plus 0.5em minus 0.4em\relax CRC Press,
  2014.

\bibitem{Jiao2019local}
J.~{Jiao}, H.~L. {Trentelman}, and M.~K. {Camlibel}, ``Distributed linear
  quadratic optimal control: compute locally and act globally,'' \emph{IEEE
  Control Systems Letters}, vol.~4, no.~1, pp. 67--72, 2020.

\bibitem{Jiao2020tac}
------, ``A suboptimality approach to distributed linear quadratic optimal
  control,'' \emph{IEEE Transactions on Automatic Control}, vol.~65, no.~3, pp.
  1218--1225, 2020.

\bibitem{XianweiLi2019tcns}
X.~{Li}, F.~{Liu}, M.~{Buss}, and S.~{Hirche}, ``Fully distributed consensus
  control for linear multiagent systems: a reduced-order adaptive feedback
  approach,'' \emph{IEEE Transactions on Control of Network Systems}, vol.~7,
  no.~2, pp. 967--976, 2020.

\bibitem{2017RL}
H.~{Zhang}, H.~{Jiang}, Y.~{Luo}, and G.~{Xiao}, ``Data-driven optimal
  consensus control for discrete-time multi-agent systems with unknown dynamics
  using reinforcement learning method,'' \emph{IEEE Transactions on Industrial
  Electronics}, vol.~64, no.~5, pp. 4091--4100, 2017.

\bibitem{ABOUHEAF20143038}
M.~I. Abouheaf, F.~L. Lewis, K.~G. Vamvoudakis, S.~Haesaert, and R.~Babuska,
  ``Multi-agent discrete-time graphical games and reinforcement learning
  solutions,'' \emph{Automatica}, vol.~50, no.~12, pp. 3038--3053, 2014.

\bibitem{WILLEMS2005325}
J.~C. Willems, P.~Rapisarda, I.~Markovsky, and B.~L. {De Moor}, ``A note on
  persistency of excitation,'' \emph{Systems \& Control Letters}, vol.~54,
  no.~4, pp. 325--329, 2005.

\bibitem{2020claudio}
C.~{De Persis} and P.~{Tesi}, ``Formulas for data-driven control:
  stabilization, optimality, and robustness,'' \emph{IEEE Transactions on
  Automatic Control}, vol.~65, no.~3, pp. 909--924, 2020.

\bibitem{Berberich2019lcss}
A.~{Romer}, J.~{Berberich}, J.~{Köhler}, and F.~{Allgöwer}, ``One-shot
  verification of dissipativity properties from input–output data,''
  \emph{IEEE Control Systems Letters}, vol.~3, no.~3, pp. 709--714, 2019.

\bibitem{Henk2020data}
H.~J. {van Waarde}, J.~{Eising}, H.~L. {Trentelman}, and M.~K. {Camlibel},
  ``Data informativity: a new perspective on data-driven analysis and
  control,'' \emph{IEEE Transactions on Automatic Control}, vol.~65, no.~11,
  pp. 4753--4768, 2020.

\bibitem{Henk2021slemma}
H.~J. van {Waarde}, M.~K. {Camlibel}, and M.~{Mesbahi}, ``From noisy data to
  feedback controllers: non-conservative design via a matrix {S}-{L}emma,''
  \emph{IEEE Transactions on Automatic Control}, pp. 1--1, 2020.

\bibitem{Berberich2020ecc}
J.~{Berberich}, A.~{Koch}, C.~W. {Scherer}, and F.~{Allgöwer}, ``Robust
  data-driven state-feedback design,'' in \emph{2020 American Control
  Conference (ACC)}, 2020, pp. 1532--1538.

\bibitem{Nima2020lcss}
N.~{Monshizadeh}, ``Amidst data-driven model reduction and control,''
  \emph{IEEE Control Systems Letters}, vol.~4, no.~4, pp. 833--838, 2020.

\bibitem{Henk2020lcss}
H.~J. {van Waarde}, C.~{De Persis}, M.~K. {Camlibel}, and P.~{Tesi},
  ``Willems’ fundamental lemma for state-space systems and its extension to
  multiple datasets,'' \emph{IEEE Control Systems Letters}, vol.~4, no.~3, pp.
  602--607, 2020.

\bibitem{Cortes2020arxiv}
A.~Allibhoy and J.~Cortés, ``Data-based receding horizon control of linear
  network systems,'' \emph{IEEE Control Systems Letters}, vol.~5, no.~4, pp.
  1207--1212, 2021.

\bibitem{Steentjes2020cdc}
T.~R. Steentjes, M.~Lazar, and P.~M. Van~den Hof, ``Data-driven distributed
  control: Virtual reference feedback tuning in dynamic networks,'' in
  \emph{2020 59th IEEE Conference on Decision and Control (CDC)}, 2020, pp.
  1804--1809.

\bibitem{Steentjes2021arxiv}
T.~R.~V. Steentjes, M.~Lazar, and P.~M. J.~V. den Hof, ``Guaranteed
  {$H_\infty$} performance analysis and controller synthesis for interconnected
  linear systems from noisy input-state data,'' 2021, [Online]. Available:
  https://arxiv.org/abs/2103.14399.

\bibitem{Baggio2021}
G.~Baggio, D.~S. Bassett, and F.~Pasqualetti, ``Data-driven control of complex
  networks,'' \emph{Nature Communications}, vol.~12, no.~1, p. 1429, Mar 2021.

\bibitem{Cherukuri2020tac}
A.~Cherukuri and J.~Cortés, ``Cooperative data-driven distributionally robust
  optimization,'' \emph{IEEE Transactions on Automatic Control}, vol.~65,
  no.~10, pp. 4400--4407, 2020.

\bibitem{KIUMARSI201786}
B.~Kiumarsi and F.~L. Lewis, ``Output synchronization of heterogeneous
  discrete-time systems: a model-free optimal approach,'' \emph{Automatica},
  vol.~84, pp. 86--94, 2017.

\bibitem{JIAO2021104872}
J.~Jiao, H.~L. Trentelman, and M.~K. Camlibel, ``{$H_2$} suboptimal output
  synchronization of heterogeneous multi-agent systems,'' \emph{Systems \&
  Control Letters}, vol. 149, p. 104872, 2021.

\bibitem{yangtao2013acc}
T.~{Yang}, X.~{Wang}, A.~{Saberi}, and A.~A. {Stoorvogel}, ``Output
  synchronization for heterogeneous networks of discrete-time introspective
  right-invertible agents with uniform constant communication delay,'' in
  \emph{2013 American Control Conference}, 2013, pp. 516--521.

\bibitem{JIANG2020109149}
Y.~Jiang, J.~Fan, W.~Gao, T.~Chai, and F.~L. Lewis, ``Cooperative adaptive
  optimal output regulation of nonlinear discrete-time multi-agent systems,''
  \emph{Automatica}, vol. 121, p. 109149, 2020.

\bibitem{HENGSTERMOVRIC2013414}
K.~Hengster-Movric, K.~You, F.~L. Lewis, and L.~Xie, ``Synchronization of
  discrete-time multi-agent systems on graphs using riccati design,''
  \emph{Automatica}, vol.~49, no.~2, pp. 414 -- 423, 2013.

\bibitem{Henk2020ifac}
H.~J. van Waarde and M.~Mesbahi, ``Data-driven parameterizations of suboptimal
  {LQR} and {$H_2$} controllers,'' in \emph{IFAC 2020}, 2020.

\bibitem{Harry2020regulator}
H.~L. Trentelman, H.~J. van Waarde, and M.~K. Camlibel, ``An informativity
  approach to data-driven tracking and regulation,'' 2020, [Online]. Available:
  https://arxiv.org/abs/0801.3390.

\end{thebibliography}

\end{document}